\newtheorem{thm}{Theorem}
\newtheorem{theorem}[thm]{Theorem}
\newtheorem*{theorem*}{Theorem}
\newtheorem{lemma}[thm]{Lemma}
\newtheorem{cor}[thm]{Corollary}
\newtheorem{prop}[thm]{Proposition}
\newtheorem{problem}[thm]{Problem}
\theoremstyle{definition}
\newtheorem{definition}[thm]{Definition}
\newtheorem{remark}[thm]{Remark}
\newtheorem{example}[thm]{Example}
\numberwithin{equation}{section}
\numberwithin{thm}{section}
\newcommand{\N}{\mathbb{N}}
\newcommand{\R}{\mathbb{R}}
\newcommand{\C}{\mathbb{C}}
\newcommand{\Z}{\mathbb{Z}}
\date{}
\title{On the chain of commuting operators on Banach spaces}
\author{Tomasz Szczepanski}
\newcommand{\Addresses}{{
  \bigskip
  \footnotesize

\textsc{Department of Mathematics, University of British Columbia,
    Vancouver, BC, Canada, V6T 1Z2}\par\nopagebreak
  \textit{E-mail address}: \texttt{tomasz@math.ubc.ca}

}}
\begin{document}
\maketitle

\abstract{
An operator $T$ on a Banach space is said to be of chain $N$ if there exist non-scalar operators $S_1,\dots,S_{N-1}$ and a non-zero compact operator $K$ such that 
$$T \leftrightarrow S_1 \leftrightarrow S_2 \leftrightarrow \dots\leftrightarrow S_{N-1} \leftrightarrow K,$$
where $A\leftrightarrow B$ denotes $AB=BA$. We investigate this concept by identifying classes of operators that are of chain $N$ for some $N$. Our main result establishes that every weighted shift on $\ell_p$ ($1\leq p<\infty$) is of chain $3$, which in particular includes the class of non-Lomonosov operators studied by Hadwin et al. Furthermore, we provide an example of an operator on a separable Hilbert space that cannot be connected to a compact operator via a commuting chain of any length.

\noindent
	\\
	{\bf Keywords}. {commuting operators, chains of commutation, Lomonosov operators}\\
	{\bf MSC 2020}. {Primary: 47A65; Secondary: 47A05}
}

\section{Introduction and motivation}
Throughout this paper, $X$ denotes a Banach space over the scalar field $\mathbb{F} = \R$ or $\C$, unless specified otherwise. For Banach spaces $X$ and $Y$, we let $\mathcal{L}(X,Y)$ denote the space of bounded linear operators from $X$ to $Y$, and write $\mathcal{L}(X)$ for $\mathcal{L}(X,X)$. For convenience, we define $\mathcal{N}(X)$ as the set of all non-scalar operators, that is $\mathcal{N}(X):=\{T\in \mathcal{L}(X): T\neq \lambda I \text{ for all }\lambda \in\mathbb{F}\}$. Furthermore, $\mathcal{K}(X)$ denotes the ideal of compact operators on $X$. For an operator $T\in \mathcal{L}(X)$, its commutant is denoted by $\{T\}' = \{S\in \mathcal{L}(X): TS=ST\}$. Whenever $S\in \{T\}'$, we write $T\leftrightarrow S$. 

To motivate the theory developed in this paper, we begin with the classical Invariant Subspace Problem. Recall that an operator $T\in \mathcal{L}(X)$ is said to admit a (non-trivial, proper) \textit{invariant subspace} if there exists a closed subspace $Y \subseteq X$ with $\{0\}\subsetneq Y\subsetneq X$, such that $T(Y)\subseteq Y$. 
\begin{problem}[Invariant Subspace Problem]
Given a Banach space $X$ and an operator $T\in \mathcal{L}(X)$, under what conditions does $T$ admit an invariant subspace?
\end{problem}

This topic has a rich and extensive history (see, e.g., \cite{InvariantSubspaces} for a comprehensive overview). Among the fundamental results yielding positive partial answers to the Invariant Subspace Problem, Lomonosov's Theorem \cite{Lomonosov73} stands out as one of the most celebrated.

\begin{theorem}[Lomonosov's Invariant Subspace Theorem]
\label{Lomonosov}
Let $T \in \mathcal{L}(X)$ be an operator on an infinite-dimensional complex Banach space $X$. Suppose there exist an operator $S \in \mathcal{N}(X)$ and a non-zero operator $K \in \mathcal{K}(X)$ such that 
$$T \leftrightarrow S \leftrightarrow K.$$
Then $T$ has an invariant subspace. 
\end{theorem}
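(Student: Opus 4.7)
The plan is to prove the stronger statement that $S$ has a non-trivial \emph{hyperinvariant} subspace, meaning a closed subspace $Y\neq\{0\},X$ with $A(Y)\subseteq Y$ for every $A\in\{S\}'$. Since $T\in\{S\}'$ by hypothesis, any such $Y$ is automatically $T$-invariant, which gives the desired conclusion. I would proceed by contradiction: suppose no hyperinvariant subspace exists. Because $\{S\}'$ is a unital algebra, $\overline{\{S\}'y}$ is a closed $\{S\}'$-invariant subspace containing $y$, so the assumption forces $\overline{\{S\}'y}=X$ for every $y\neq 0$.

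The heart of the argument is a Schauder fixed-point construction that manufactures a compact operator $L\in\{S\}'$ with $1$ as an eigenvalue. After rescaling so $\|K\|=1$, I choose $x_0$ with $\|Kx_0\|>1$ (hence $\|x_0\|>1$) and set $B:=\{x\in X:\|x-x_0\|\leq 1\}$. Then $0\notin B$, and $\|Kx\|\geq\|Kx_0\|-1>0$ for every $x\in B$, so $0\notin\overline{K(B)}$; compactness of $K$ makes $\overline{K(B)}$ norm-compact. For each $y\in\overline{K(B)}$ the density hypothesis provides $A_y\in\{S\}'$ with $\|A_yy-x_0\|<1$, and the open sets $V_y:=\{z:\|A_yz-x_0\|<1\}$ cover $\overline{K(B)}$. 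Extract a finite subcover with operators $A_1,\ldots,A_n$, and a continuous partition of unity $\{\phi_i\}$ on $\overline{K(B)}$ subordinate to it. Define
\[
\Psi(x)\;:=\;\sum_{i=1}^{n}\phi_i(Kx)\,A_iKx.
\]
Convexity of $B$, together with the fact that $\phi_i(Kx)>0$ only when $A_iKx\in B$, gives $\Psi(B)\subseteq B$; compactness of $K$ combined with Mazur's theorem on the closed convex hull of a compact set shows $\Psi$ has relatively compact image. Schauder's theorem then produces $x^*\in B$ with $\Psi(x^*)=x^*$, and freezing $\alpha_i:=\phi_i(Kx^*)$ yields the compact operator $L:=\sum_i\alpha_iA_iK\in\{S\}'$ with $Lx^*=x^*\neq 0$.

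Riesz--Schauder theory now makes $E:=\ker(L-I)$ finite-dimensional and non-zero. Since $SL=LS$, the space $E$ is $S$-invariant, so $S|_E$ is an operator on a finite-dimensional complex vector space and admits an eigenvalue $\mu\in\C$ with an eigenvector in $E\setminus\{0\}$. A direct check shows that $\ker(S-\mu I)$ is $\{S\}'$-invariant: for $w\in\ker(S-\mu I)$ and $A\in\{S\}'$ one has $S(Aw)=A(Sw)=\mu(Aw)$. This kernel is non-zero, and it cannot coincide with $X$ because $S$ is non-scalar; hence it is a non-trivial hyperinvariant subspace for $S$, contradicting the initial assumption.

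The most delicate point will be the Schauder setup: arranging simultaneously that $B$ and $K(B)$ avoid the origin (so the density hypothesis may be applied pointwise on $\overline{K(B)}$), that $\Psi$ is a self-map of $B$, and that $\Psi$ is compact. By contrast the algebraic closing step is routine, since $E$ is obtained by combining an eigenvalue of $S$ (which uses the complex field essentially) with the general fact that eigenspaces of an operator are hyperinvariant. It is precisely this last step that pins the theorem to the scalar field $\C$.
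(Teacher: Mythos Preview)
The paper does not supply its own proof of this theorem; it is stated with a citation to \cite{Lomonosov} and then used as a black box throughout. Your argument is the standard Lomonosov--Hilden proof (Schauder fixed point on a ball pushed forward by $K$, yielding a compact $L\in\{S\}'$ with eigenvalue $1$, then passing to an eigenspace of $S$ on the finite-dimensional $\ker(L-I)$), and it is correct as written; in particular you correctly use $K\in\{S\}'$ so that $L=\sum_i\alpha_iA_iK$ lies in the commutant, and your remark that the complex field is needed only at the very last eigenvalue step is accurate.
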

We say that $T\in \mathcal{L}(X)$ is a \textit{Lomonosov operator} (shortly, $T$ is \textit{Lomonosov}) if it satisfies the assumption of Theorem \ref{Lomonosov}. Otherwise, we will call it a \textit{non-Lomonosov operator}.

At the time V. Lomonosov's work was published, it was unknown whether non-Lomonosov operators existed. An indirect answer was provided by P. Enflo \cite{Enflo} and C. Read \cite{Read1985, Read1986}, who constructed the first examples of operators without invariant subspaces which, by Theorem \ref{Lomonosov}, cannot be Lomonosov operators. Another natural question is whether every operator admitting an invariant subspace is necessarily Lomonosov. The answer to this, however, is negative.

\begin{theorem}[\cite{Hadwin}]
\label{Hadwin}
Let $H$ be a separable Hilbert space. There exists a non-empty class of operators in $\mathcal{L}(H)$ such that every operator $T$ in this class has an invariant subspace but fails to be Lomonosov.
\end{theorem}
Motivated by Theorem \ref{Lomonosov}, it is natural to ask whether longer chains of commuting operators also guarantee the existence of an invariant subspace. However, V. Troitsky \cite{Troitsky} demonstrated that this fails to hold in general.
\begin{theorem}
\label{VT}
Let $T\in \mathcal{L}(\ell_1)$ be the operator constructed in \cite{Read1986}. Then there exist operators $S_1, S_2 \in \mathcal{N}(\ell_1)$ and a rank-one operator $F\in \mathcal{L}(\ell_1)$ such that
$$T \leftrightarrow S_1 \leftrightarrow S_2 \leftrightarrow F.$$
\end{theorem}
Since the operator $T\in \mathcal{L}(\ell_1)$ constructed in \cite{Read1986} does not admit an invariant subspace, it cannot be connected to a non-zero compact operator via a shorter chain of commuting operators. Consequently, the chain in Theorem \ref{VT} is minimal for $T$. This motivates the following natural definition.

\begin{definition}
Let $N\geq 2$ be an integer. An operator $T \in \mathcal{L}(X)$ is said to be \textit{of chain $N$} if there exist operators $S_1,S_2,\dots,S_{N-1} \in \mathcal{N}(X)$ and a non-zero $K \in \mathcal{K}(X)$ such that 
$$T \leftrightarrow S_1 \leftrightarrow S_2 \leftrightarrow\dots\leftrightarrow S_{N-1} \leftrightarrow K.$$ 
Furthermore, $T$ is said to be of chain $0$ if $T\in \mathcal{K}(X)$, and of chain $1$ if it commutes with a non-zero compact operator. Finally, we define $T$ to be \textit{of minimal chain $N$} if it is of chain $N$ but not of chain $N-1$.
\end{definition}
In particular, if an operator $T\in \mathcal{L}(X)$ on an infinite-dimensional complex Banach space of chain $2$,  Theorem \ref{Lomonosov} guarantees that it admits an invariant subspace. Furthermore, Theorem \ref{Hadwin} shows that there exist operators with invariant subspaces that are not of chain $2$, while Theorem \ref{VT} demonstrates the existence of operators without invariant subspaces of minimal chain $3$. 

The goal of this note is to investigate which classes of operators are of chain $N$ for some $N\geq 0$. The first main question addressed in this work concerns the length of the chain of commuting operators for the non-Lomonosov operators from Theorem \ref{Hadwin}. In \cite{Hadwin}, it was shown that they are not of chain $2$, but it is not clear whether operators in this class can be connected to a non-zero compact operator via a longer chain. As it turns out, one extra non-scalar operator is sufficient.

\begin{theorem}
\label{non-LomonosovChain3}
Every operator $T\in \mathcal{L}(H)$ from Theorem \ref{Hadwin} is of minimal chain $3$.
\end{theorem}

This shows that the family of operators of chain $3$ is highly diverse, as it includes the operator constructed in \cite{Read1986} without invariant subspaces, as well as the non-Lomonosov operators with invariant subspaces presented in \cite{Hadwin}.

The second main result determines whether every operator can be connected via a sufficiently long chain to a non-zero compact operator. Building on an example from \cite{Ambrozie}, we show that the answer is negative.

\begin{theorem}
\label{NoCompactAmbrozie}
There exists an operator $T\in \mathcal{L}(\ell_2)$ such that for all integers $N\geq 2$, no chain of the form
$$T \leftrightarrow S_1 \leftrightarrow S_2 \leftrightarrow\dots \leftrightarrow S_{N-1} \leftrightarrow K$$
can be formed for any non-scalar operators $S_1,\dots,S_{N-1}\in \mathcal{N}(\ell_2)$ and any non-zero compact operator $K\in \mathcal{K}(\ell_2)$.
\end{theorem}

Notably, Theorem \ref{NoCompactAmbrozie} shows the existence of a non-Lomonosov operator of a different type. While the non-Lomonosov operators from \cite{Hadwin} and \cite{Read1986} can be connected to a compact operator via a chain of length $3$, the operator from \cite{Ambrozie} cannot be connected to a non-zero compact operator via any finite chain. It is worth mentioning that this operator has plenty of invariant subspaces.

The structure of this note is as follows. In Section \ref{sec:general_theory}, we provide proofs of auxiliary results showing that certain classes of operators can be connected to a rank-one operator via a finite chain. Building on the results from Section \ref{sec:general_theory}, we give a proof of Theorem \ref{Hadwin} in Section \ref{sec:non-lomonosov_shifts} by showing that a general class of weighted shifts on $\ell_p$ ($1\leq p<\infty)$, which includes the operators from Theorem \ref{Hadwin}, is of chain $3$. In Section \ref{sec:reducton_to_rank-one}, we investigate the question of connecting an operator not only to a non-zero compact operator $K$ via a chain of commuting operators, but specifically to a rank-one operator $F$. In the final section, we give a short exposition of the theory of commuting graphs and prove Theorem \ref{NoCompactAmbrozie}.

\section{General theory}
\label{sec:general_theory}

We start with the following simple, yet useful observation. It is easy to see that if an operator is non-injective or without a dense range, it has an invariant subspace. An operator with both of these properties has a rank-one operator in its commutant.
\begin{lemma}
\label{InjectiveDenseRange}
Let $T\in \mathcal{L}(X)$ be an operator that is not injective and without a dense range. Then there exists a rank-one operator $F\in \mathcal{L}(X)$ such that $T \leftrightarrow F$. 
\end{lemma}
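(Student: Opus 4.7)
The plan is to construct $F$ explicitly as a rank-one tensor $x_0 \otimes \phi$, where $x_0$ sits in $\ker T$ and $\phi$ sits in the annihilator of the range of $T$, and then verify that both $TF$ and $FT$ vanish identically, making the commutation relation trivially true.

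First I would extract the two ingredients from the hypotheses. Since $T$ is not injective, there exists a nonzero vector $x_0 \in X$ with $Tx_0 = 0$. Since $T$ does not have dense range, the closed subspace $\overline{\operatorname{range}(T)}$ is a proper closed subspace of $X$, so by the Hahn--Banach theorem there exists a nonzero $\phi \in X^*$ such that $\phi(Tx) = 0$ for every $x \in X$; equivalently $T^*\phi = 0$.

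Then I would define $F \colon X \to X$ by $F(x) = \phi(x)\, x_0$. This is a bounded linear operator with $\|F\| \le \|\phi\|\,\|x_0\|$, and since both $\phi$ and $x_0$ are nonzero, $F$ has one-dimensional range spanned by $x_0$, hence is rank one. Finally I would compute the two compositions: for every $x \in X$,
\[
TF(x) = T\bigl(\phi(x)\, x_0\bigr) = \phi(x)\, Tx_0 = 0,
\]
and
\[
FT(x) = \phi(Tx)\, x_0 = (T^*\phi)(x)\, x_0 = 0.
\]
Thus $TF = 0 = FT$, so $T \leftrightarrow F$, completing the proof.

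There is essentially no obstacle here: the only non-elementary ingredient is the Hahn--Banach theorem, which is standard. The whole content of the lemma is the observation that non-injectivity supplies a vector and non-dense range supplies (via duality) a functional, and these two ingredients are exactly what one needs to build a rank-one operator that commutes with $T$ in the strongest possible way, namely by annihilating it on both sides.
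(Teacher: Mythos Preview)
Your proof is correct and is essentially identical to the paper's: the paper also picks $0\neq y\in\ker T$ and $0\neq f\in X^*$ vanishing on $\operatorname{Range}T$, forms the rank-one operator $(f\otimes y)(x)=f(x)y$, and leaves the verification of $T\leftrightarrow f\otimes y$ to the reader. You have simply made the Hahn--Banach step and the computation $TF=FT=0$ explicit.
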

\begin{proof}
Take $0\neq y\in \ker(T)$ and $0\neq f\in X^*$ such that $\operatorname{Range}(T) \subseteq \ker(f)$. Consider a rank-one operator $f\otimes y\in \mathcal{L}(X)$ given by $(f\otimes y)(x): = f(x)y$. Then it is easy to check that $T \leftrightarrow f\otimes y$.
\end{proof}

The following result is an immediate consequence of Lemma \ref{InjectiveDenseRange} and provides a useful tool for constructing commuting chains. Throughout this paper, by a projection on a Banach space $X$ we mean a bounded idempotent operator $P\in\mathcal{L}(X)$. This convention also applies when $X$ is a Hilbert space; in particular, a projection is not assumed to be orthogonal.

\begin{cor}
\label{ProjectionandFiniteRank}
Let $T\in \mathcal{L}(X)$ be a nilpotent operator or a projection. Then there exists a rank-one operator $F\in \mathcal{L}(X)$ such that $T \leftrightarrow F$. 
\end{cor} 

A natural question to ask is whether just one assumption from Lemma \ref{InjectiveDenseRange} is sufficient. In general, an operator $T$ being non-injective or lacking a dense range does not imply the existence of a rank-one operator $F$ commuting with $T$. It turns out that in both cases, such an operator need not even be Lomonosov.

To see this, recall that, for $T\in\mathcal{L}(X)$, the range of $T$ is dense in $X$ if and only if its Banach adjoint $T'\in\mathcal{L}(X^*)$ is injective. When $T\in\mathcal{L}(H)$ acts on a Hilbert space $H$, we continue to use $T'\in\mathcal{L}(H^*)$ for its Banach adjoint, while $T^*\in\mathcal{L}(H)$ denotes its Hilbert space adjoint.

Moreover, we need the following elementary fact.
\begin{lemma}
\label{adjointChainPreserved}
Let $T\in \mathcal{L}(X)$. If $T$ is of chain $N$, then $T'$ is also of chain $N$. The converse holds if $X$ is reflexive.
\end{lemma}

\begin{example}
\label{NonInjectiveNonLomonosov}
We first provide an example of an operator without a dense range that fails to be Lomonosov. Because this is precisely the operator $T$ from Theorem \ref{Hadwin}, and since we will refer to it later in the paper, we outline its construction below. Comprehensive details can be found in the survey paper by A. Shields \cite[Section II]{Shields}. 

Let $\beta=(\beta_n)_{n=0}^\infty$ be a sequence of strictly positive numbers with $\beta_0=1$. We define $$H^2(\beta):=\{f(z) = \sum_{n=0}^\infty a_n z^n: (\beta_na_n)_{n=0}^\infty \in \ell_2\}.$$ 
While these sums are formal expressions in general, for certain weights $\beta$ they represent analytic functions on the open unit disk. We restrict our focus to such sequences (e.g., $\beta_n = e^{\sqrt{n}}$), as they were used in \cite{Hadwin}. The space $H^2(\beta)$ is a Hilbert space under the inner product $\langle f,g\rangle = \sum_{n=0}^\infty a_n\overline{b_n} |\beta_n|^2$, where $g(z)=\sum_{n\geq 0} b_nz^n$. Furthermore, the sequence $(e_n)_{n=0}^\infty$, defined by $e_n = \frac{z^n}{\beta_n}$, for $n\geq 0$, forms an orthonormal basis for $H^2(\beta)$. 

We define the multiplication operator $M_z : H^2(\beta) \rightarrow H^2(\beta)$ by
$$(M_zf)(z) := \sum_{n=0}^\infty a_n z^{n+1},$$
for any $f\in H^2(\beta)$, where $f(z)=\sum_{n\geq 0}a_n z^n$. Hadwin et al. \cite{Hadwin} demonstrated that under an additional restriction on $\beta$, the operator $M_z$ is non-Lomonosov. Moreover, we observe that $\operatorname{Range}(M_z)\subseteq \operatorname{span}\{e_n: n\geq 1\}$. As a consequence, $\overline{\operatorname{Range}(M_z)} \neq H^2(\beta)$, establishing that a non-dense range does not guarantee that an operator is Lomonosov.

By duality, we now consider the Banach adjoint $(M_z)'\in \mathcal{L}(H^2(\beta)^*) \cong \mathcal{L}(H^2(\beta))$. Because $M_z$ lacks a dense range, $(M_z)'$ must be non-injective. Furthermore, from Lemma \ref{adjointChainPreserved}, the fact that $M_z$ is non-Lomonosov implies that $(M_z)'$ is also non-Lomonosov. This shows that non-injectivity of an operator does not imply it being Lomonosov.
\end{example}

Next, we note that if either an operator $T$ or its Banach adjoint $T'$ has an eigenvalue, $T$ is not necessarily of chain $2$, demonstrated when considering the operators $\lambda I - M_z$ and $\lambda I - (M_z)'$, where $\lambda \in \C$ and $M_z$ is the operator from Example \ref{NonInjectiveNonLomonosov}. However, the following result shows that if both $T$ and $T'$ share a common eigenvalue, the conclusion is much stronger. This fact will be particularly useful in Section \ref{sec:reducton_to_rank-one}.

\begin{lemma}
\label{Eigenvalue}
Let $T\in \mathcal{L}(X)$ be such that both $T$ and $T'$ share an eigenvalue $\lambda \in \mathbb{F}$. Then there exists a rank-one operator $F\in \mathcal{L}(X)$ such that $T \leftrightarrow F$.
\end{lemma}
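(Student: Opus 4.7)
The plan is to mimic the proof of Lemma \ref{InjectiveDenseRange}, but replace the kernel conditions (which amount to $0$ being an eigenvalue of $T$ and of $T^*$) by the general eigenvalue condition. The case $\lambda = 0$ in Lemma \ref{InjectiveDenseRange} used $y \in \ker T$ and $f \in X^{*}$ with $\text{Range }T \subseteq \ker f$, the latter being equivalent to $T^{*}f = 0$. So here we just shift this to the eigenvalue $\lambda$.

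First I would pick a non-zero eigenvector $y \in X$ with $Ty = \lambda y$ and a non-zero eigenfunctional $f \in X^{*}$ with $T^{*}f = \lambda f$, both available by hypothesis. Then I would define the rank-one operator $F := f \otimes y \in L(X)$, i.e.\ $F(x) = f(x) y$ for $x \in X$, and check commutation by a direct computation: for any $x \in X$,
\begin{align*}
(TF)(x) &= T(f(x)y) = f(x)\,Ty = \lambda f(x) y,\\
(FT)(x) &= f(Tx)\, y = (T^{*}f)(x)\, y = \lambda f(x) y,
\end{align*}
so $TF = FT$, and $F$ is non-zero of rank one.

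There is no real obstacle here; the only thing one might worry about is that, compared to Corollary \ref{TandTstarNonInjective}, we no longer have the eigenvalues of $T$ and $T^{*}$ a priori equal (in the non-injective case both are $0$ automatically). The hypothesis of the lemma is precisely what fixes this: assuming the \emph{same} $\lambda$ works for both $T$ and $T^{*}$ is exactly what makes the two sides of the commutator cancel to give the same scalar multiple of $f(x)y$. No appeal to the structure of $X$, to reflexivity, or to any spectral theory beyond the definition of eigenvalue is needed.
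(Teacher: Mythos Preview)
Your proof is correct and is essentially the same as the paper's: the paper simply observes that $\lambda I - T$ and $(\lambda I - T)^* = \lambda I - T^*$ are both non-injective and invokes Corollary~\ref{TandTstarNonInjective}, which in turn unwinds to exactly the $f\otimes y$ construction you wrote out explicitly. The only difference is that you perform the direct commutation calculation rather than citing the earlier result; the underlying argument is identical.
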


\begin{proof}
Let $\lambda \in \mathbb{F}$ be an eigenvalue for $T$ and $T'$. Consequently, the operators $\lambda I - T$ and $\lambda I - T'$ are non-injective. In particular, $\lambda I-T$ fails to have a dense range. By Lemma \ref{InjectiveDenseRange}, there exists a rank-one operator $F\in \mathcal{L}(X)$ which commutes with $\lambda I-T$, therefore with $T$.
\end{proof}

\begin{remark}
When $X$ is a complex Hilbert space, the relevant condition may equivalently be formulated by requiring that there exist an eigenvalue $\lambda$ of $T$ such that $\overline{\lambda}$ is an eigenvalue of its Hilbert space adjoint $T^*$.
\end{remark}

Our next goal is to construct longer commuting chains using the results from this section. We will use the class of \textit{reducing operators} to form such chains in Section \ref{sec:non-lomonosov_shifts}. Recall that for an operator $T\in \mathcal{L}(X)$, a pair $(V,W)$ of closed subspaces of $X$ is said to be a \textit{reducing pair} for $T$ if $X=V\oplus W$ and both $V$ and $W$ are $T$-invariant. An operator $T$ is then said to be \textit{reducing} if it admits a non-trivial proper reducing pair $(V,W)$.

The following is a classical fact (see, e.g., \cite[Theorem 2.22]{Invitation}).

\begin{theorem}
\label{ReducingCommutes}
An operator $T\in \mathcal{L}(X)$ is reducing if and only if there exists a projection $P\in \mathcal{N}(X)$ such that $T\leftrightarrow P$.  
\end{theorem}

It allows us to immediately deduce the following.

\begin{lemma}
\label{ReducingIsLomonosov}
Let $T\in \mathcal{L}(X)$ be a reducing operator. Then $T$ is of chain $2$.
\end{lemma}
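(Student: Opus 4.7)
The plan is to chain together two results already in hand: Theorem \ref{ReducingCommutes} to replace the reducing condition with commutation with a non-scalar idempotent, and Corollary \ref{ProjectionandFiniteRank} to produce a rank-one operator in the commutant of that idempotent.

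First, since $T$ is reducing, Theorem \ref{ReducingCommutes} yields a projection $P \in N(X)$ with $P^2 = P$ satisfying $T \leftrightarrow P$. Because $P$ is non-scalar, it is a non-trivial projection, so $\ker P$ is a non-zero proper closed subspace and $\mathrm{Range}\, P = \ker(I-P)$ is a proper closed subspace; in particular $P$ is neither injective nor has dense range.

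Next, by Corollary \ref{ProjectionandFiniteRank} (or equivalently by Lemma \ref{InjectiveDenseRange} applied to $P$), there exists a rank-one $F \in L(X)$ with $P \leftrightarrow F$. Since $F$ is rank-one, it is a non-zero compact operator. Combining the two commutation relations gives
$$T \leftrightarrow P \leftrightarrow F,$$
with $P \in N(X)$ and $F$ a non-zero compact operator, which is exactly the definition of $T$ being of chain $2$.

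There is essentially no obstacle here: the statement is a direct combination of Theorem \ref{ReducingCommutes} and Corollary \ref{ProjectionandFiniteRank}. The only point worth checking carefully is that the $P$ produced is genuinely non-scalar (it is, by the statement of Theorem \ref{ReducingCommutes}) and that $F$ can be taken non-zero (automatic from rank-one), so that the intermediate operator lies in $N(X)$ and the terminating operator lies in $K(X)\setminus\{0\}$, as required by the chain definition.
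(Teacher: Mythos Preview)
Your proof is correct and follows essentially the same route as the paper: invoke Theorem~\ref{ReducingCommutes} to obtain a non-scalar projection $P$ commuting with $T$, then apply Corollary~\ref{ProjectionandFiniteRank} to get a rank-one $F$ commuting with $P$, yielding the chain $T \leftrightarrow P \leftrightarrow F$. The additional verification you include (that $P$ is non-injective and lacks dense range, and that $F$ is non-zero compact) is fine but not strictly necessary given the cited results.
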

\begin{proof}
Let $P\in \mathcal{N}(X)$ be a projection commuting with $T$. From Corollary \ref{ProjectionandFiniteRank} there exists a rank-one operator $F\in \mathcal{L}(X)$ such that $P \leftrightarrow F$. Thus we obtain a chain $T \leftrightarrow P \leftrightarrow F$. 
\end{proof}

The converse of Lemma \ref{ReducingIsLomonosov} does not hold in general. To see this, let $K: L_2[0,1] \rightarrow L_2[0,1]$ be the Volterra operator given by
\begin{equation}
\label{exampleVolterra}
(Kf)(t) := \int_0^t f(s)\,ds \quad \text{for } f\in L_2[0,1] \text{ and } t\in [0,1].
\end{equation}
It is known that $K$ is a compact operator, so in particular $K$ is of chain $2$. Furthermore, a closed subspace $V \subseteq L_2[0,1]$ is $K$-invariant if and only it is of the form
\begin{equation}
\label{exampleVolterra_subspaces}
V = \{f\in L_2[0,1]: f=0\text{ on } [0,a] \text{ a.e.}\},
\end{equation}
for some $0\leq a\leq 1$ (see, e.g., \cite[Theorem 7.4.1]{garcia2023operator}). In particular, it is not possible to find a non-trivial proper pair $(V,W)$ that reduces $K$.

The final component from this section that will be used in the proof of Theorem \ref{non-LomonosovChain3} is that chains of commuting operators are stable under similarity. Let $T_1\in \mathcal{L}(X)$ and $T_2\in \mathcal{L}(Y)$ be operators on Banach spaces $X$ and $Y$. We say that $T_1$ is \textit{similar} to $T_2$ if there exists an invertible operator $R\in \mathcal{L}(Y,X)$ such that $T_1=RT_2R^{-1}$. 

\begin{lemma}
\label{similarChainPreserved}
Let $X$ and $Y$ be Banach spaces and suppose the operators $T_1\in \mathcal{L}(X)$ and $T_2\in \mathcal{L}(Y)$ are similar. If $T_1$ is of chain $N$, then $T_2$ is also of chain $N$.
\end{lemma}
\begin{proof}
Note that if $S \in \mathcal{N}(X)$ then $R^{-1}SR \in \mathcal{N}(Y)$. Using this it is easy to verify that if $T_1$ has a chain
$$T_1 \leftrightarrow S_1 \leftrightarrow S_2 \leftrightarrow \dots\leftrightarrow S_{N-1} \leftrightarrow K,$$
then $T_2$ has a chain
$$T_2 \leftrightarrow R^{-1}S_1R \leftrightarrow R^{-1}S_2R \leftrightarrow \dots \leftrightarrow R^{-1}S_{N-1}R \leftrightarrow R^{-1}KR.$$
The claim also trivially holds when $T_1\in \mathcal{K}(X)$ or when $T_1 \leftrightarrow K$. 
\end{proof}

Using this notion, we can easily deduce that every normal operator on a separable Hilbert space $H$ is of chain $2$. Indeed, given a normal operator $T\in \mathcal{L}(H)$, it is known that $T$ is similar to a multiplication operator $M_f : L_2(\mu) \rightarrow L_2(\mu)$ for some finite measure $\mu$ and $f\in L_\infty(\mu)$ (see, e.g., \cite[Theorem 1.6]{InvariantSubspaces}). Furthermore, \cite[Theorem 2.8]{AbramovicMultiplication} establishes that $M_f$ is of chain $2$. By Lemma \ref{similarChainPreserved}, it follows that $T$ must also be of chain $2$.

The above also holds for non-separable Hilbert spaces. In Section \ref{sec:commuting_graphs} we state Theorem \ref{nonseparableHilbert} which allows us to conclude that every operator (and thus, in particular, any normal operator) on a non-separable Hilbert space is of chain $2$.

\begin{remark}
It is worth noting that multiplication operators defined on $C(K)$ spaces, where $K$ is any compact Hausdorff topological space, are also of chain $2$ (see \cite[Corollary 2.7]{AbramovicMultiplication}).
\end{remark}

To finish this section, we consider one more relation between operators. Let $T_1 \in \mathcal{L}(X)$ and $T_2 \in \mathcal{L}(Y)$ be operators on Banach spaces $X$ and $Y$. We say that $T_1$ is \textit{quasi-similar} to $T_2$ if there exist operators $A\in \mathcal{L}(Y,X)$ and $B\in \mathcal{L}(X,Y)$, that are injective and have dense ranges, such that $T_1 A=AT_2$ and $BT_1 = T_2 B.$

Note that similar operators are quasi-similar. By Lemma \ref{similarChainPreserved}, similarity preserves chains of any length. The following result shows that quasi-similarity preserves chains of length $1$. 
\begin{lemma}
\label{quasi-similar}
Let $X$ and $Y$ be Banach spaces and suppose $T_1\in \mathcal{L}(X)$ and $T_2\in \mathcal{L}(Y)$ are quasi-similar via operators $A\in \mathcal{L}(Y,X)$ and $B\in \mathcal{L}(X,Y)$. If $T_1$ is of chain $1$, then $T_2$ is also of chain $1$.
\end{lemma}
\begin{proof}
Suppose $T_1K_1 = K_1T_1$ for some non-zero operator $K_1\in \mathcal{K}(X)$. Consider $K_2 = BK_1A.$ Then $K_2 \in \mathcal{K}(Y)$. Moreover,
$$T_2K_2 = T_2 (BK_1A) = BT_1K_1A = BK_1T_1A = (BK_1A)T_2 = K_2 T_2.$$
It is left to show that $K_2 \neq 0$. Suppose otherwise. Then $BK_1A=0$, which implies that $\operatorname{Range}(A) \subseteq \ker (BK_1)$. Using the fact that $\overline{\operatorname{Range}(A)} =X$  we get $BK_1 = 0$, so $\operatorname{Range}(K_1) \subseteq \ker(B)$. Since $\ker(B)=\{0\}$, this implies $K_1=0$, giving a contradiction.
\end{proof}

\begin{remark}
The preceding proof also guarantees that if $T_1 \in \mathcal{K}(X)$ is quasi-similar to $T_2$, then $T_2$ is of chain $1$. Interestingly, in general, we cannot expect for $T_2$ to be compact. T. Hoover \cite{Hoover} provided an example of two quasi-similar operators $T_1$ and $T_2$, where $T_1$ is compact and $T_2$ is non-compact. In particular, this $T_2$ is of minimal chain $1$.
\end{remark}

\section{Non-Lomonosov shifts of chain 3}
\label{sec:non-lomonosov_shifts}

The primary objective of this section is to establish Theorem \ref{non-LomonosovChain3}. We will demonstrate that for a specific family of operators that includes those from Theorem \ref{non-LomonosovChain3}, there always exists a commuting chain
$$T \leftrightarrow S_1 \leftrightarrow S_2 \leftrightarrow K.$$
More precisely, we will show that on a Banach space with an unconditional basis $(e_n)_{n=1}^\infty$, every operator defined by $Te_n = w_n e_{\sigma(n)}$, where $(w_n)_{n=1}^\infty$ is a scalar sequence and $\sigma : \N \rightarrow \N$ is injective, is of chain $3$.

The following lemma provides a suitable set, which allows us to define a projection $P\in \mathcal{N}(X)$ for the purpose of constructing a commuting chain.

\begin{lemma}
\label{sigmaset}
Let $\sigma:\N \rightarrow \N$ be injective. There exists a proper, non-empty subset $B \subsetneq \N$ with the following property: for every $n\in \N$,
$$n \in B \iff \sigma^2(n)\in B.$$ 
\end{lemma}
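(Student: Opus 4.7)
The plan is to rephrase the biconditional as a statement about orbits. Define an equivalence relation on $\N$ by $m\sim n$ iff $\sigma^{2i}(m)=\sigma^{2j}(n)$ for some $i,j\ge 0$; transitivity uses the injectivity of $\sigma^2$ (which follows from that of $\sigma$). A routine check shows that $B\subseteq\N$ satisfies $n\in B\iff \sigma^2(n)\in B$ exactly when $B$ is a union of $\sim$-classes. So the lemma reduces to producing at least two distinct $\sim$-classes, since one can then take $B$ to be any single class.

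To produce two classes I would split on whether $\sigma$ is surjective. If $\sigma$ is not surjective, pick $m\in\N\setminus\sigma(\N)$ and show $m\not\sim\sigma(m)$: such an equivalence would yield $\sigma^{2i}(m)=\sigma^{2j+1}(m)$ for some $i,j\ge 0$; the two exponents have opposite parity, so one is strictly smaller, and cancelling that many $\sigma$'s via injectivity gives $m=\sigma^{\ell}(m)$ for some odd $\ell\ge 1$, forcing $m\in \sigma(\N)$, a contradiction. Then taking $B$ to be the $\sim$-class of $m$ works, with $\sigma(m)$ witnessing $B\neq \N$.

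If instead $\sigma$ is bijective and has any finite cycle $C$, then $C$ is a proper (finite) subset of the infinite set $\N$, is manifestly closed under both $\sigma^2$ and its inverse, and so serves as $B$. If $\sigma$ has no finite cycles, every $\sigma$-orbit is two-sided with all iterates $\sigma^k(n)$, $k\in\Z$, distinct; then $n\sim\sigma(n)$ would equate an even and an odd iterate of $n$, contradicting distinctness, so the $\sim$-class of $n$ can serve as $B$. The one place where care is needed is the cancellation step in the non-surjective case, namely converting $\sigma^{2i}(m)=\sigma^{2j+1}(m)$ into $m\in \sigma(\N)$ by applying injectivity to the smaller iterate of $\sigma$; everything else is essentially bookkeeping.
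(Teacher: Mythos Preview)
Your proof is correct. It follows essentially the same blueprint as the paper's: split on whether $\sigma$ is surjective, and in the non-surjective case take the even-iterate forward orbit of a point $m\notin\sigma(\N)$. Your set $B$ (the $\sim$-class of $m$) coincides exactly with the paper's $B=\{\sigma^{2k}(m):k\ge 0\}$, and your argument that $m\not\sim\sigma(m)$ is the same cancellation the paper uses to rule out $\sigma(n_0)\in B$.

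The main difference is packaging. You introduce the equivalence relation $\sim$ and observe that the admissible sets $B$ are precisely the unions of $\sim$-classes; this gives a clean conceptual characterization and reduces the lemma to showing there are at least two classes. The paper instead writes down $B$ explicitly and verifies the biconditional by hand. In the bijective case you further split into ``has a finite cycle'' versus ``all orbits infinite'', whereas the paper handles both at once with a single contradiction (if $A=B=\N$ then $\sigma(n_0)=\sigma^{2k}(n_0)$ forces the orbit to be finite). Your extra split is harmless but slightly longer; the paper's uniform argument is a bit slicker here. One small remark: transitivity of $\sim$ does \emph{not} actually require injectivity of $\sigma^2$ (just apply enough powers of $\sigma^2$ to both given equalities); injectivity is only genuinely needed in the cancellation step you flagged.
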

\begin{proof}
We consider two separate cases.

Case 1. $\sigma$ is onto. Fix $n_0 \in \N$ and consider the sets
$$A = \{\sigma^k(n_0): k\in \Z\} ~\text{ and } ~B = \{\sigma^{2k}(n_0): k \in \Z\}.$$
First, note that $\varnothing \neq B \subseteq A \subseteq \N$. Our goal is to show that $B$ is a proper subset of $\N$. If $A$ is a proper subset of $\N$, then automatically so is $B$. Let us assume that $A=\N$. Suppose, by way of contradiction, that $B=\N=A$. This implies that $\sigma(n_0) \in B$, so there exists a $k\in \Z$ such that $\sigma(n_0) = \sigma^{2k}(n_0)$. But then
\begin{align*}
A &= \{\sigma^{-2k+1}(n_0),\sigma^{-2k+2}(n_0),\dots,\sigma^{-1}(n_0),n_0, \sigma(n_0),\dots,\sigma^{2k-1}(n_0)\}. 
\end{align*}
In particular $A$ is finite, yielding a contradiction. Hence $B\subsetneq\N$. Moreover, it is straightforward to check that 
$$n \in B \iff \sigma^2(n)\in B,$$
which concludes Case 1.

Case 2. $\sigma$ is not onto. Just like in Case 1, we fix $n_0 \in \N$ but we additionally assume that $n_0\notin \operatorname{Range} (\sigma)$. Consider the sets 
$$A = \{\sigma^k(n_0): k\in \N\cup\{0\}\} ~\text{ and } ~B = \{\sigma^{2k}(n_0): k \in \N\cup \{0\}\}.$$
We will show that $B$ satisfies the claim. To show $B\neq \N$, following the same argument as in Case 1, suppose, by way of contradiction, that $B=\N=A$. Once again, since $\sigma(n_0) \in B$, we can find $k\in \N\cup \{0\}$ such that $\sigma(n_0) = \sigma^{2k}(n_0)$. But then
\begin{align*}
A =\{n_0,\sigma(n_0),\dots,\sigma^{2k-1}(n_0)\},
\end{align*}
which again results in a contradiction. Hence $B\subsetneq\N$. 

Finally we show that
$$n \in B \iff \sigma^2(n)\in B.$$ 
First, suppose $n\in B$. We can find $k\in \N\cup\{0\}$ such that
$n=\sigma^{2k}(n_0)$. Thus, $\sigma^2(n)=\sigma^{2(k+1)}(n_0) \in B$. 

Suppose now $n$ is such that $\sigma^2(n) \in B$. There exists a $k \in \N\cup\{0\}$ with $\sigma^2(n)=\sigma^{2k}(n_0)$. Note that $k\neq 0$, as this would imply $n_0=\sigma^2(n)\in \operatorname{Range}(\sigma^2) \subseteq \operatorname{Range}( \sigma)$, which contradicts $n_0 \notin \operatorname{Range}(\sigma)$. Therefore, $k\geq 1$ and $n=\sigma^{2k-2}(n_0) \in B$ follows.
\end{proof}

We are ready to prove the main result of this section.
\begin{prop}
\label{Chain4Main}
Let $X$ be a Banach space with an unconditional basis $(e_n)_{n=1}^\infty$. Let $T\in \mathcal{L}(X)$ be an operator satisfying
$$Te_n = w_n e_{\sigma(n)} \qquad n\geq 1,$$
where $(w_n)_{n=1}^\infty$ is a sequence of scalars and $\sigma:\N\rightarrow \N$ is an injective function. Then $T^2$ is reducing. In particular, $T$ is of chain $3$. 
\end{prop}
\begin{proof}
By Lemma \ref{sigmaset}, there exists a proper, non-empty subset $B \subsetneq \N$ such that $n\in B \iff \sigma^2(n)\in B$. 
We define $P_B : X \rightarrow X$ via
$$
P_B e_n = 
     \begin{cases}
       0 &\text{if } n \notin B,\\
       e_n &\text{if } n\in B.\\
     \end{cases}
$$
Then $P_B$ is a projection on $X$. Since $\varnothing \subsetneq B \subsetneq \N$ we get that $P_B \in \mathcal{N}(X)$. We claim that $T^2 \leftrightarrow P_B$. First, observe that for any $n\in \N$,
$$T^2 e_n = T(w_n e_{\sigma(n)}) = w_nw_{\sigma(n)}e_{\sigma^2(n)}.$$
Now, fix $n \in \N$. If $n\in B$, then $\sigma^2(n)\in B$, which yields
\begin{align*}
T^2P_B e_n = w_nw_{\sigma(n)}e_{\sigma^2(n)} = P_B T^2 e_n.
\end{align*}
Conversely, if $n\notin B$, then $\sigma^2(n)\notin B$, giving
\begin{align*}
T^2P_B e_n = 0 = P_B T^2 e_n.
\end{align*}
Thus, $T^2 \leftrightarrow P_B$. By Theorem \ref{ReducingCommutes}, the operator $T^2$ is reducing. Lemma \ref{ReducingIsLomonosov} then implies that $T^2$ is of chain $2$. Finally, since $T \leftrightarrow T^2$, it follows that $T$ is of chain $3$
\end{proof}

Our next goal is to apply Proposition \ref{Chain4Main} to the class of operators from Theorem \ref{Hadwin}, focusing specifically on the operator $M_z\in \mathcal{L}(H^2(\beta))$ presented in Example \ref{NonInjectiveNonLomonosov}. To do so, we require the following crucial fact (see \cite[Section II, Proposition 7]{Shields}).

\begin{prop}
\label{UnitaryEquivalence}
Let $M_z \in \mathcal{L}(H^2(\beta))$. Then $M_z$ is unitary equivalent to a weighted shift $T\in \mathcal{L}(\ell_2)$ defined by
$$Te_n = w_n e_{n+1} \qquad n\geq 1,$$
for some sequence of scalars $(w_n)_{n=1}^\infty$. 
\end{prop}

\begin{proof}[Proof of Theorem \ref{non-LomonosovChain3}]
By Proposition \ref{UnitaryEquivalence}, $M_z$ is unitary equivalent (thus in particular similar) to a weighted shift $T\in \mathcal{L}(\ell_2)$. As a consequence of Lemma \ref{similarChainPreserved}, $M_z$ is of the same chain as $T$. From Proposition \ref{Chain4Main}, it follows that $T$ is of chain $3$, hence so is $M_z$. In particular, if $\beta$ is chosen in such a way that $M_z$ is a non-Lomonosov operator, we conclude that $3$ is the minimal chain of $M_z$.
\end{proof}

\begin{remark}
Theorem \ref{non-LomonosovChain3} can be also verified directly. One can show that $(M_z)^2 = M_{z^2}$ commutes with the projection $P\in \mathcal{N}(H^2(\beta))$ given by
$$
Pz^n = 
     \begin{cases}
       0 &\text{if $n$ is odd},\\
       z^n &\text{if $n$ is even}.\\
     \end{cases}
$$
By Corollary \ref{ProjectionandFiniteRank}, there exists a rank-one operator $F\in \mathcal{L}(H^2(\beta))$ such that $P\leftrightarrow F$. This can be also shown directly by considering
$$   
Fz^n = 
     \begin{cases}
       0 &\text{if } n\geq 1,\\
       z^0 &\text{if } n=0.\\
     \end{cases}
$$
Thus we explicitly construct the chain $M_z \leftrightarrow M_{z^2} \leftrightarrow P \leftrightarrow F.$
\end{remark}

\section{Reduction to rank-one operators}
\label{sec:reducton_to_rank-one}

So far in this article, when considering chains of commuting operators, the final operator in the chain has been not only a non-zero compact operator, but even a rank-one operator. It is interesting to ask whether this is always the case. More precisely, suppose an operator $T\in \mathcal{L}(X)$ is of chain $N_1$ for some $N_1\geq 1$. Is it possible to construct a new chain of length $N_2\geq N_1$ starting from $T$ that allows us to connect $T$ to a rank-one operator? We start with an observation that, in general, we should expect $N_2 > N_1$.

\begin{example}
\label{Volterra2}
Let $K:L_2[0,1]\rightarrow L_2[0,1]$ be the Volterra operator defined by equation \eqref{exampleVolterra}. We first observe that $K$ does not commute with any non-zero finite-rank operator $F$. If such an $F$ were to exist, its range $\operatorname{Range}(F)$ would be $K$-invariant. This gives a contradiction, since formula \eqref{exampleVolterra_subspaces} demonstrates that $K$ has no finite-dimensional invariant subspaces. Consequently, the chain $K \leftrightarrow F$ cannot hold for any non-zero finite-rank operator $F$.

Now, consider the operator $T = I+K$. Because $\{T\}'=\{K\}'$, it follows that $T$ also fails to commute with any finite-rank operator. Furthermore, while $T$ itself is not compact, it commutes with the compact operator $K$. Therefore, $T$ is of chain $1$, despite failing to commute with any operator of finite rank.
\end{example}

This shows that, in general, if we want to connect an operator to a rank-one operator via a chain of commuting operators, then this chain must be longer. The natural next question is to determine exactly how much longer this chain must be. Equivalently, given a non-zero operator $K\in \mathcal{K}(X)$, what length of a chain is required to connect $K$ to a rank-one operator $F\in \mathcal{L}(X)$? The following lemma shows that for non-quasinilpotent compact operators on a complex Banach space, one additional operator suffices. Recall that an operator $S\in \mathcal{L}(X)$ is called \textit{quasinilpotent} if its spectrum satisfies $\sigma(S)=\{0\}$. We also denote by $\sigma_p(S)$ the point spectrum of $S$.

\begin{lemma}
\label{CompactFiniteRank}
Let $X$ be a complex Banach space and let $K\in \mathcal{K}(X)$ be a non-quasinilpotent operator. Then there exists a rank-one operator $F\in \mathcal{L}(X)$ such that $K \leftrightarrow F.$
\end{lemma}
\begin{proof}
Fix $0\neq \lambda \in \sigma(K)$. As $K$ is compact, we get $\lambda \in \sigma_p(K)$. Since $\sigma(K)=\sigma(K')$, we also have $\lambda \in \sigma(K')$. Moreover, compactness of $K'$ implies $\lambda \in \sigma_p(K')$. From Lemma \ref{Eigenvalue}, there exists a rank-one operator $F\in \mathcal{L}(X)$ such that $K \leftrightarrow F$.
\end{proof}

In the case when $X$ is a real Banach space, commutant of $K$ may contain a rank-two operator instead.

\begin{prop}
\label{CompactFiniteRank2}
Let $X$ be a real Banach space and let $K\in \mathcal{K}(X)$ be a non-quasinilpotent operator. Then there exists a rank-one or rank-two operator $F\in \mathcal{L}(X)$ such that $K\leftrightarrow F$. 
\end{prop}

\begin{proof}
Let $K\in \mathcal{K}(X)$ be non-quasinilpotent. If $\sigma_p(K)$ contains a real eigenvalue $\lambda$, then the same argument used in Lemma \ref{CompactFiniteRank} guarantees the existence of a (real) rank-one operator $F$ that commutes with $K$.

Now, suppose $\sigma_p(K)$ consists exclusively of non-real eigenvalues. We pass to the complexi\-fication $X_\C$ and consider the complexified operator $K_\C \in \mathcal{K}(X_\C)$. Fix $0\neq \lambda \in \sigma_p(K_\C)$. It follows that $\lambda \in \sigma_p(K_\C')$ as well. As demonstrated in the proof of Lemma \ref{Eigenvalue}, $K_\C$ commutes with the rank-one operator $f\otimes x\in \mathcal{L}(X_\C)$, where $0\neq f\in \ker(\lambda I - K_\C')$ and $0\neq x\in \ker(\lambda I - K_\C)$. By taking complex conjugates, we obtain $0\neq \overline{x}\in \ker(\overline{\lambda} I - K_\C)$ and $0\neq \overline{f} \in \ker(\overline{\lambda} I - K_\C')$, implying that $K_\C$ also commutes with $\overline{f}\otimes \overline{x}$. Finally, consider the rank-two operator $F_2 = f\otimes x + \overline{f}\otimes \overline{x}$. It is straightforward to verify that $F_2$ acts as a real operator on $X$. Furthermore, because $F_2$ commutes with $K_\C$, the same is true for $K$.
\end{proof}

Note that in any infinite-dimensional Banach space $X$, Lemma \ref{InjectiveDenseRange} ensures that every rank-two operator $F_2\in \mathcal{L}(X)$ commutes with a rank-one operator $F_1\in \mathcal{L}(X)$. Therefore, for any compact, non-quasinilpotent operator $K$ on a real Banach space, we always construct a chain $K \leftrightarrow F_2 \leftrightarrow F_1$. In the complex case, a direct chain $K \leftrightarrow F_1$ is always attainable.

It is a natural question to ask whether a similar chain exists for quasinilpotent compact operators. Since the Volterra operator is an example of such an operator, by Example \ref{Volterra2}, we can immediately deduce that a direct chain $K \leftrightarrow F$ to a rank-one operator $F$ need not exist in general. Nevertheless, the following example demonstrates that the Volterra operator can be connected to a rank-one operator $F$, via one additional nilpotent operator.

\begin{example}
Let $K : L_2[0,1] \rightarrow L_2[0,1]$ be the Volterra operator. It is useful to note that $K$ can be expressed as $Kf = \mathbbm{1} \ast f$, where $\ast$ denotes convolution. It is well known that $K$ is a quasinilpotent operator. As observed in Example \ref{Volterra2}, a direct chain $K \leftrightarrow F$ is not possible for any finite-rank $F$. Nevertheless, following the remark under \cite[Lemma 2.6]{Ambrozie}, we define the operator $M : L_2[0,1] \rightarrow L_2[0,1]$ by
$$(Mf)(t) := (\mathbbm{1}_{[\frac{1}{2},1]} \ast f)(t) = \int_0^1 \mathbbm{1}_{[\frac{1}{2},1]}(t)f(t-s) \,ds.$$
By the commutativity and associativity of convolution, it follows that $K \leftrightarrow M$. Furthermore, because $\mathbbm{1}_{[\frac{1}{2},1]} \ast \mathbbm{1}_{[\frac{1}{2},1]} = 0$, the operator $M$ is nilpotent of order two (i.e. $M^2=0$). Corollary \ref{ProjectionandFiniteRank} then guarantees the existence of a rank-one operator $F$ satisfying $M \leftrightarrow F$. Thus, we have explicitly constructed the chain
$$K \leftrightarrow M \leftrightarrow F.$$
\end{example}

\begin{problem}
Let $X$ be a Banach space and let $K\in \mathcal{K}(X)$ be a non-zero, quasinilpotent operator. Does there exist an operator $S\in \mathcal{N}(X)$ and a rank-one operator $F\in \mathcal{L}(X)$ such that 
$$K\leftrightarrow S \leftrightarrow F$$
holds?
\end{problem}

Finally, observe that the proofs of Lemma \ref{CompactFiniteRank} and Proposition \ref{CompactFiniteRank2} rely on spectral properties of compact operators. Consequently, the same statements will hold for the more general family of non-quasinilpotent \textit{strictly singular} operators (i.e. operators not bounded below on any infinite-dimensional closed subspace of $X$), as they share the same spectral properties as compact operators. However, in this case, we must additionally assume that the Banach adjoint of a given strictly singular operator is also strictly singular. While not true in general, it is known to hold for operators defined on $\ell_p$ for $1\leq p<\infty$, $L_p[0,1]$ for $1\leq p\leq\infty$ or $C[0,1]$. Interestingly, in general, for quasinilpotent strictly singular operators, we cannot guarantee a chain $T \leftrightarrow S \leftrightarrow F$, as there exist examples of such operators without invariant subspaces (see \cite{Read1999StrictlySO}).

\section{The theory of commuting graphs}
\label{sec:commuting_graphs}

Our goal in this section is to prove Theorem \ref{NoCompactAmbrozie} by showing that an operator constructed in \cite{Ambrozie} is precisely of this type. Because this operator was originally introduced within the context of commuting graphs, we first provide a brief overview of that theory.

We begin with some preliminary definitions. As usual, let $X$ denote a Banach space. We define the commuting graph $\Gamma(\mathcal{N}(X))$ as the graph whose vertices are the elements of $\mathcal{N}(X)$, with an edge between $T_1,T_2\in \mathcal{N}(X)$ if and only if they commute (i.e., $T_1 \leftrightarrow T_2$). The distance between $T_1$ and $T_2$ in $\Gamma(\mathcal{N}(X))$ is defined by
$$d(T_1,T_2) := \min\{N\in\N: \exists_{S_1,\dots,S_{N-1} \in \mathcal{N}(X)}~~T_1\leftrightarrow S_1\leftrightarrow S_2\leftrightarrow \dots\leftrightarrow S_{N-1}\leftrightarrow T_2\}.$$
If no such path exists, we set $d(T_1,T_2)=\infty$. If the distance between any two operators in $\mathcal{N}(X)$ is finite, we say that $\Gamma(\mathcal{N}(X))$ is \textit{connected}, and we define its diameter as 
$$\text{diam}(\Gamma(\mathcal{N}(X))) := \sup\{d(T_1,T_2) : T_1,T_2\in \mathcal{N}(X)\}.$$ 
If the graph is not connected, we say it is \textit{disconnected}.

Note the immediate similarity to the concept of commuting chains from this article. In this context, given an operator $T_1\in \mathcal{L}(X)$, one seeks for a chain connecting it to every operator $T_2\in \mathcal{N}(X)$, rather than to some non-zero compact operator. In particular, if $\text{diam}(\Gamma(\mathcal{N}(X)))=N$, then every operator $T\in \mathcal{L}(X)$ is of chain $N$. 

This topic has been studied extensively (see, e.g., \cite{Ambrozie}, \cite{Kuzma2}, \cite{Kuzma3} and \cite{Radjavi}). The goal is to determine for which spaces $X$ the graph $\Gamma(\mathcal{N}(X))$ is connected. Furthermore, whenever this is the case, one seeks to calculate its diameter. If $\Gamma(\mathcal{N}(X))$ is disconnected, the objective is to characterize the connected components of $\Gamma(\mathcal{N}(X))$. 

We now recall some known results in this area that are relevant to our theory of chains of commuting operators. First, the graph of commuting operators is connected on every finite dimensional Hilbert space of dimension greater than $2$. More precisely, if $H$ is a Hilbert space with $2<\dim H<\infty$, then $\operatorname{diam}(\Gamma(\mathcal{N}(H)))=4.$ The complex case was established in \cite[Corollary 7]{Radjavi}, while the real case was subsequently resolved in \cite{Miguel2013,Grau2017}.

The case where $\dim H=2$ is notably different. We present the proof of the following elementary result, as it provides valuable intuition for the infinite-dimensional separable Hilbert spaces treated later in this section.
\begin{lemma}
Let $H$ be a Hilbert space with $\dim H=2$. Then $\Gamma(\mathcal{N}(H))$ is disconnected.
\end{lemma}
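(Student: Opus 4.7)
The plan is to show that when $\dim H = 2$, the commutation relation restricted to $N(H)$ is already transitive, so any chain in $\Gamma(N(H))$ collapses to a single edge. Disconnectedness then reduces to the trivial observation that there exist two non-scalar operators that do not commute.

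The key lemma I would establish first is: for every non-scalar $T \in L(H)$ with $\dim H = 2$, one has $\{T\}' = \text{span}\{I, T\}$. Since $T$ is not a scalar multiple of $I$, the set $\{I, T\}$ is linearly independent, and the minimal polynomial of $T$ has degree exactly $2$. A Jordan form argument (over $\C$, or a rational canonical form argument over $\R$ when $T$ has no real eigenvalue) shows that $\dim \{T\}' = 2$, forcing $\{T\}' = \text{span}\{I, T\}$.

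From this lemma the path-collapsing step is immediate. Suppose $T_1, T_2, S \in N(H)$ with $T_1 \leftrightarrow S \leftrightarrow T_2$. Since $S \in \{T_1\}' = \text{span}\{I, T_1\}$ and $S$ is non-scalar, we can write $S = aI + bT_1$ with $b \neq 0$, whence $T_1 \in \text{span}\{I, S\} = \{S\}'$. Consequently $\{T_1\}' = \{S\}' \ni T_2$, i.e.\ $T_1 \leftrightarrow T_2$. A routine induction on path length in $\Gamma(N(H))$ then shows that any two operators lying in the same connected component must commute directly.

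Thus it suffices to produce a single pair of non-commuting operators in $N(H)$. Fixing an orthonormal basis of $H$, the matrices
\[
T_1 = \begin{pmatrix} 1 & 0 \\ 0 & 0 \end{pmatrix}, \qquad T_2 = \begin{pmatrix} 0 & 1 \\ 1 & 0 \end{pmatrix}
\]
are non-scalar and satisfy $T_1T_2 \neq T_2T_1$, so by the previous paragraph they lie in different components of $\Gamma(N(H))$. The only mildly technical point of the whole argument is the verification that $\dim\{T\}' = 2$ for every non-scalar $T$ (in particular when $\F = \R$ and $T$ has complex conjugate eigenvalues); this is the main — though entirely routine — obstacle.
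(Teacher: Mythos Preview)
Your proposal is correct and follows essentially the same approach as the paper: both arguments hinge on the fact that a non-scalar $2\times 2$ matrix has commutant equal to $\mathrm{span}\{I,T\}$, so that the commutant stabilizes along any chain. The only cosmetic difference is that the paper verifies this commutant identity for the single matrix $T_1=\begin{pmatrix}1&0\\0&0\end{pmatrix}$ and concludes directly that its connected component is $\{T_1\}'\cap N(H)$, whereas you prove the commutant identity for \emph{every} non-scalar $T$ and phrase the conclusion as ``commutation is transitive on $N(H)$''; the underlying mechanism is identical.
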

\begin{proof}
Since $\dim H=2$, we identify $\mathcal{N}(H)$ with the space of non-scalar matrices in $M_2(\mathbb{F})$. Let 
\begin{align*}
T_1=\begin{bmatrix}
1 & 0\\
0 & 0 \\
\end{bmatrix}.
\end{align*}
From a direct calculation it follows that for $B_1 \in \mathcal{N}(H)$, $T_1\leftrightarrow B_1$ if and only if $B_1$ is of the form $B_1 = aI+bT_1$, for some $a,b\in \mathbb{F}, b\neq 0$. In particular, this implies that $\{B_1\}' = \{aI+bT_1\}' = \{T_1\}'$. This shows the commutant of $T_1$ stabilizes, hence $\Gamma(\mathcal{N}(H))$ is disconnected.
\end{proof}

Observe that in the preceding proof, we constructed an operator $T$ with the property that $\{T\}' \neq \mathcal{N}(H)$, and such that for every non-scalar operator $S\in \{T\}'$, we have $\{S\}'=\{T\}'$. For finite dimensional Hilbert spaces, such a construction is only possible when $\dim H=2$. 

The case of infinitely dimensional Hilbert space $H$ has also been investigated. 
\begin{theorem}[Corollary 2.2 in \cite{Ambrozie}] 
\label{nonseparableHilbert} Let $H$ be a non-separable Hilbert space. Then $\text{diam}(\Gamma(\mathcal{N}(H))) =2$
\end{theorem}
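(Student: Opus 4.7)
The plan is to show that for any pair $T_1, T_2 \in N(H)$ there is a single non-scalar $S \in N(H)$ with $T_1 \leftrightarrow S \leftrightarrow T_2$, which gives $d(T_1,T_2) \leq 2$. Since $N(H)$ certainly contains non-commuting elements (e.g.\ two non-commuting orthogonal projections onto proper subspaces), the diameter is at least $2$, so it equals $2$.

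Fix $T_1, T_2 \in N(H)$ and choose any nonzero $x_0 \in H$. Let $\mathcal{W}$ be the (countable) collection of all finite polynomial words in the four symbols $T_1, T_2, T_1^*, T_2^*$, and let $M$ be the closed linear span of $\{w(T_1,T_2,T_1^*,T_2^*)\,x_0 : w \in \mathcal{W}\}$. Because $\mathcal{W}$ is countable, $M$ is separable. By construction $M$ is invariant under each of $T_1, T_2, T_1^*$ and $T_2^*$; invariance of $M$ under both $T_i$ and $T_i^*$ is equivalent to $M$ being a reducing subspace for $T_i$, i.e.\ $M^{\perp}$ is also $T_i$-invariant. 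Therefore the orthogonal projection $P$ onto $M$ commutes with both $T_1$ and $T_2$.

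It remains to verify $P \in N(H)$. Since $x_0 \in M$ we have $P \neq 0$, and since $M$ is separable while $H$ is not, we have $M \neq H$ and so $P \neq I$. Hence $P$ is a non-scalar projection commuting with both $T_1$ and $T_2$, and $T_1 \leftrightarrow P \leftrightarrow T_2$ exhibits $d(T_1,T_2)\leq 2$, completing the proof. The only subtle point, and the place where I expect the main conceptual obstacle to lie, is the choice to include the adjoints $T_1^*, T_2^*$ when generating $M$: without them one only obtains an invariant (not reducing) separable subspace, and the corresponding (oblique) projection would not in general commute with the $T_i$. Non-separability of $H$ is used precisely once, to guarantee $M \neq H$ and hence that $P$ is non-scalar.
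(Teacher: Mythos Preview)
Your argument is correct. The key idea---generating a separable closed subspace $M$ from a single vector using all words in $T_1,T_2,T_1^*,T_2^*$, so that $M$ is automatically reducing for both operators and the orthogonal projection onto $M$ lies in $\{T_1\}'\cap\{T_2\}'$---is exactly the standard one, and your justification that $P\neq 0,I$ (using $x_0\in M$ and separability of $M$ versus non-separability of $H$) is clean. The lower bound $\mathrm{diam}\geq 2$ via two non-commuting projections is also fine.

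Note, however, that the present paper does not supply its own proof of this theorem: it is quoted verbatim as Corollary~2.2 of \cite{Ambrozie} and used as a black box. Your proof coincides with the argument one finds there (and more generally with the folklore proof that any countable family of operators on a non-separable Hilbert space admits a common nontrivial reducing subspace). So there is nothing to contrast against a different approach in this paper; you have reproduced the expected proof.
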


As an immediate corollary of Theorem \ref{nonseparableHilbert}, it follows that every operator $T$ on a non-separable Hilbert space is of chain $2$. It is natural to ask whether this result also holds for non-separable Banach spaces.

Next, we restrict our attention to infinite-dimensional separable Hilbert spaces. Interestingly, in this case, $\Gamma(\mathcal{N}(H))$ turns out to be disconnected, just as in dimension $2$.
\begin{theorem}[Theorem 2.3 in \cite{Ambrozie}]
\label{AmbrozieOperator}
Let $H$ be a separable infinite-dimensional Hilbert space over $\C$. Then there exists $T\in \mathcal{N}(H)$ with the following property: $\{T\}' \neq \mathcal{N}(H)$ and 
$$\text{ for every } S\in \mathcal{N}(H), \quad T\leftrightarrow S \implies \{S\}'=\{T\}'.$$
\end{theorem}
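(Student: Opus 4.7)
The goal is to exhibit $T \in N(H)$ whose commutant ``locks in'': every non-scalar $S \in \{T\}'$ yields $\{S\}' = \{T\}'$. My plan is to find $T$ such that (i) $\{T\}'$ is \emph{abelian}, which automatically forces $\{T\}' \subseteq \{S\}'$ for every $S \in \{T\}'$; and (ii) for every non-scalar $S \in \{T\}'$, the operator $T$ itself lies in the weak-operator-closed subalgebra generated by $I$ and $S$, yielding the reverse inclusion $\{S\}' \subseteq \{T\}'$. This is the natural infinite-dimensional analogue of the two-dimensional example worked out above, where $\{T_1\}' = \mathrm{span}\{I, T_1\}$ makes the recovery trivial: any non-scalar $S = aI + bT_1$ satisfies $T_1 = b^{-1}(S - aI)$, and therefore sits in $\mathrm{alg}(I, S)$.

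\textbf{Candidate construction.} I would look for $T$ among the cyclic operators on $H \cong \ell_2$ whose commutant coincides with the weak-operator closure of the polynomial algebra in $T$. A natural candidate is a carefully chosen quasinilpotent compact operator (for instance, a weighted shift with rapidly decreasing weights), for which the commutant algebra can be identified with a rigid algebra of formal series in $T$. The weights and ambient structure would be tuned so that every non-scalar element of $\{T\}'$ carries a non-zero ``leading term'' in $T$, allowing one to invert via an inverse-function-theorem argument inside the Banach algebra $\{T\}'$ and extract $T$ back from $S$. Alternatively, following Ambrozie more directly, one may build $T$ as a specific perturbation of a model operator whose commutant structure is well understood.

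\textbf{Main obstacle.} The hard part is step (ii): ensuring that \emph{every} non-scalar $S \in \{T\}'$ genuinely generates $T$ in the closed algebra. This fails for naive models---for instance, if $\{T\}'$ is modeled on formal power series in $T$ without extra rigidity, then $S = T^2$ cannot recover $T$ because every polynomial in $S$ has only even-degree terms in $T$. Similarly, in the commutant of the Volterra operator, higher-order convolution elements need not generate the first one. Ambrozie's insight must be a choice of $T$ whose spectral and algebraic structure is thin enough to prevent any non-scalar commutant element from sitting ``strictly below'' $T$; I expect this rigidity verification to be the technical heart of the argument, and anticipate it rests on a careful analysis of the weak-operator closure in a specific Banach algebra attached to $T$.
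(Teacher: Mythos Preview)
Your strategic outline---seek $T$ with abelian commutant consisting of ``formal power series in $T$,'' then show every non-scalar such series recovers $T$---matches the shape of the argument. The paper does not prove this theorem itself (it is quoted from \cite{Ambrozie}), but it records the key structural fact as Lemma~\ref{commutantofAmbrozie}: every $A \in \{T\}'$ satisfies $Ax = \sum_i c_i T^i x$ on $c_{00}$. So step (i) of your plan is exactly right, and the reduction of step (ii) to ``$T$ lies in the closed algebra generated by $S$'' is the correct framing.

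Your concrete candidate, however, will not work. First, $T$ cannot be compact: the paper's next result, Theorem~\ref{AmbrozieOperatorNoCompact}, shows that $\{T\}'$ contains no non-zero compact operator, so in particular $T \notin K(H)$. A quasinilpotent compact weighted shift is therefore ruled out from the start. Second, you correctly flag the $S = T^2$ obstruction for plain shifts, but the resolution is not an inverse-function-theorem argument inside a Banach algebra. The actual $T$ (whose construction the paper reviews in detail for the proof of Theorem~\ref{AmbrozieOperatorNoCompact}) is a \emph{perturbed} backward shift: $Te_j = e_{j-1}$ except at a sparse sequence of indices $r_k$, where
\[
Te_{r_k} = \varepsilon_k e_{r_k-1} + \frac{\sqrt{\varepsilon_k}}{\|u_{h(k)}\|}\, u_{h(k)},
\]
with $h:\N\to\N_0$ satisfying the density condition that for all $j,n$ and all residues $s<n$ there are infinitely many $k$ with $h(k)=j$ and $r_k \equiv s \bmod n$. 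These perturbation terms are precisely what defeat the $T^2$-type obstruction: the residue condition on $r_k$ guarantees that no non-scalar series $\sum c_i T^i$ can commute with anything outside $\{T\}'$. Your proposal isolates the right difficulty but does not supply the mechanism that overcomes it; that mechanism is combinatorial and tied to the specific perturbation data $(r_k,h,\varepsilon_k)$, not an abstract invertibility principle.
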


In particular, taking any $T_1 \in \mathcal{N}(H)\setminus\{T\}'$ guarantees that $d(T,T_1)=\infty$. Nevertheless, it is not immediately obvious whether this construction provides an operator that cannot be connected to a non-zero compact operator. One must verify that the commutant $\{T\}'$ contains no non-zero compact operator $K$. This indeed holds true, as demonstrated by the following result.

\begin{prop}
\label{AmbrozieOperatorNoCompact}
Let $T\in \mathcal{L}(H)$ be the operator from Theorem \ref{AmbrozieOperator}. Then the only compact operator $K \in \{T\}'$ is $K=0$.
\end{prop}

Proposition \ref{AmbrozieOperatorNoCompact}, combined with the stabilization of the commutant of $T$ described in Theorem \ref{AmbrozieOperator}, immediately proves Theorem \ref{NoCompactAmbrozie}.

The remainder of this section is devoted to proving Proposition \ref{AmbrozieOperatorNoCompact}. We begin by reviewing the definition of $T$. Because its construction is highly involved, we state only the facts necessary to prove Proposition \ref{AmbrozieOperatorNoCompact}. Full details of this construction can be found in Section $3$ of \cite{Ambrozie}. 

Let $H=\ell_2$. We fix an orthonormal basis $(e_k)_{k=0}^\infty$ of $\ell_2$. For this construction it is convenient to index the basis starting from zero. We first construct a bounded linear operator $T : c_{00} \rightarrow c_{00}$ on the space of finitely supported sequences $c_{00}$, which we will subsequently extend to a bounded linear operator $T: \ell_2 \rightarrow \ell_2$ with the desired property. 

We fix an increasing sequence $(r_k)_{k=0}^\infty \subseteq \N \cup \{0\}$ and a function $h: \N \rightarrow \N\cup\{0\}$ satisfying the following conditions:
\begin{enumerate}
\item[(i)] $r_0=0, r_1=4,~ 4r_k<r_{k+1}<6r_k$, for any $k\in \N$;
\item[(ii)] $h(k) \leq k-1$, for all $k\in \N$;
\item[(iii)] for all $j,n \in \N$ and each $s\in \{0,1,\dots,n-1\}$, there exist infinitely many $k\in \N$ that simultaneously satisfy $h(k)=j$ and $r_k \equiv s\mod n.$
\end{enumerate} 
The existence of such a sequence and function was established in \cite[Lemma 3.1]{Ambrozie}. We highlight an important feature of the sequence $(r_k)_{k=0}^\infty$, namely that $\lim_{k\rightarrow \infty}(r_k - r_{k-1} -1)=\infty$, which we will use later.  We also fix a decreasing sequence of real numbers $(\varepsilon_k)_{k=1}^\infty$ such that $0<\varepsilon_k<\frac{1}{2}$ for all $k\in\N$. 

Next, we construct an auxiliary sequence $(u_n)_{n=-\infty}^\infty \subseteq c_{00}$. We define $u_n = 0$ for $n<0$ and $u_0=e_0$. The remaining elements $u_n$ (for $n\geq 1)$ are constructed alongside the operator $T:c_{00} \rightarrow c_{00}$ so that its action on the basis elements is given by:
\begin{align*}
Te_0 &= 0, \quad Te_j = e_{j-1} \quad \text{if } r_k < j < r_{k+1},\\
Te_{r_k} &= \varepsilon_k e_{r_k-1} + \frac{\sqrt{\varepsilon_k}}{\|u_{h(k)}\|}u_{h(k)},\\
u_{r_k} &= \frac{1}{\varepsilon_1\dots \varepsilon_k}e_{r_k}, \quad\text{and}\quad
u_j = T^{r_k-j} u_{r_k}\quad \text{if } r_{k-1}<j<r_k.
\end{align*}
We then extend $T$ linearly to $c_{00}$. We also define
$$   
\omega_j =
     \begin{cases}
       1 &\text{if } j \neq r_k ,\text{ for all } k\in\N,\\
       \varepsilon_k &\text{if } j=r_k~,\text{ for some } k\in\N.\\
     \end{cases}
$$
It was shown (check the remark above \cite[Lemma 3.3]{Ambrozie}) that $\|T\|\leq 2$, provided that the sequence $(\varepsilon_k)_{k=1}^\infty$ decreases to zero sufficiently fast. This allows to extend $T$ to a bounded linear operator on $\ell_2$. With a slight abuse of notation, we also denote this extension by $T$. 

Next, we list crucial properties of the operator $T$ that allow us to prove Proposition \ref{AmbrozieOperatorNoCompact}.
\begin{lemma}[Lemma 3.4 and Lemma 3.3 (i) in \cite{Ambrozie}]
\label{commutantofAmbrozie}
Let $A\in \{T\}'.$ Then there exists a sequence $(c_i)_{i=0}^\infty\subseteq \C$ with $\sum_{i=0}^\infty |c_i|^2 <\infty$ such that 
\begin{align}
\label{commutantEquation}
Ax = \sum_{i=0}^\infty c_i T^i x \quad \text{for all } x\in c_{00}.
\end{align}
In particular, for every $j\in \N$, we have
$$Ae_j = c_0e_j+ \Big(\sum_{i=1}^j c_i\omega_j\dots \omega_{j-i+1}e_{j-i}\Big) + v \quad \text{for some } v\in \operatorname{span}\{e_0,\dots,e_{k-1}\},$$
where $k$ is chosen such that $r_k\leq j< r_{k+1}.$
\end{lemma}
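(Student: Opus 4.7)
The key structural fact driving the proof is that $T$ acts as a backward shift on the auxiliary sequence $(u_n)_{n\ge 0}$: from the recursive definitions one checks directly that $Tu_n=u_{n-1}$ for $n\ge 1$ and $Tu_0=0$, so $T^iu_n=u_{n-i}$ for $0\le i\le n$ and $T^iu_n=0$ for $i>n$. A routine induction further shows that each $u_j$ lies in $\text{span}\{e_0,\ldots,e_j\}$ with a nonzero coefficient at $e_j$ (equal to $1/(\varepsilon_1\cdots\varepsilon_{k-1})$ for $r_{k-1}<j<r_k$, and $1/(\varepsilon_1\cdots\varepsilon_k)$ at $j=r_k$), so the change of basis between $(u_j)$ and $(e_j)$ is triangular and invertible on $c_{00}$. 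Every $x\in c_{00}$ is therefore a finite linear combination of the $u_m$'s, and any formal series $\sum_i c_iT^ix$ reduces to a finite sum when evaluated on such $x$---convergence is automatic.

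Given $A\in\{T\}'$, fix large $N$ and define $c_i^{(N)}$ by expanding $Au_N=\sum_{i=0}^{N} c_i^{(N)}\,u_{N-i}$ (possible by the triangular change of basis). The commutation $TA=AT$ together with $Tu_m=u_{m-1}$ forces the $c_i^{(N)}$ to be independent of $N$: since $T(Au_N)=A(Tu_N)=Au_{N-1}$, applying $T$ to the expansion yields $\sum_i c_i^{(N)}u_{N-1-i}=Au_{N-1}=\sum_i c_i^{(N-1)}u_{N-1-i}$, and linear independence of the $u_m$'s gives $c_i^{(N)}=c_i^{(N-1)}=:c_i$. Thus $Au_n=\sum_i c_i T^i u_n$ for every $n$, and by linearity $Ax=\sum_i c_iT^ix$ for every $x\in c_{00}$. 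Square summability $\sum|c_i|^2<\infty$ is then extracted by expanding $Au_N$ in the orthonormal basis $(e_m)$: by triangularity, the coefficient of $e_{N-i}$ in $Au_N$ is $c_i$ times the explicit leading coefficient of $u_{N-i}$, plus contributions from low-index tails of $u_{N-i'}$ with $i'<i$. Parseval combined with $\|Au_N\|^2\le\|A\|^2\|u_N\|^2$, and a choice of $N$ along a sequence where the ratios of leading coefficients to $\|u_N\|$ remain bounded below (using the growth restrictions in (i) and the distribution of $h$-values from (iii)), yields the required bound.

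The ``in particular'' formula for $Ae_j$ now follows directly from the identity $Ae_j=\sum_{i=0}^{\infty}c_iT^ie_j$. Since $e_j$ is a finite combination of $u_0,\ldots,u_j$, one has $T^ie_j=0$ for $i>j$, so the sum actually runs only from $0$ to $j$. An induction on $i$ then shows that for $r_k\le j<r_{k+1}$,
$$T^ie_j=\omega_j\omega_{j-1}\cdots\omega_{j-i+1}\,e_{j-i}+w_{i,j},\qquad w_{i,j}\in\text{span}\{e_0,\ldots,e_{k-1}\}.$$
The only deviation from the pure backward shift occurs whenever the orbit passes through a special index $r_s\le j$; there the identity $Te_{r_s}=\varepsilon_s e_{r_s-1}+\tfrac{\sqrt{\varepsilon_s}}{\|u_{h(s)}\|}u_{h(s)}$ introduces an extra summand $u_{h(s)}\in\text{span}\{e_0,\ldots,e_{h(s)}\}\subseteq\text{span}\{e_0,\ldots,e_{k-1}\}$ (since $h(s)\le s-1\le k-1$), while the ``main-diagonal'' coefficient picks up the factor $\varepsilon_s=\omega_{r_s}$. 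Summing $c_iT^ie_j$ over $0\le i\le j$ gives the stated formula with $v=\sum_{i=0}^{j}c_iw_{i,j}\in\text{span}\{e_0,\ldots,e_{k-1}\}$. The principal obstacle is the square-summability step: because the leading coefficients $1/(\varepsilon_1\cdots\varepsilon_{l-1})$ vary widely with $l$, Parseval does not directly produce $\sum|c_i|^2<\infty$, and a delicate choice of test indices exploiting (i)--(iii) and the decay rate of $(\varepsilon_k)$ is required.
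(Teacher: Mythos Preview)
The paper does not prove this lemma at all: it is simply quoted from \cite{Ambrozie} (Lemmas~3.3(i) and~3.4 there) and used as a black box in the proof of Theorem~\ref{AmbrozieOperatorNoCompact}. There is therefore no in-paper proof to compare your attempt against.

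As for your sketch itself: the structural backbone is correct and matches what is done in \cite{Ambrozie}. The identity $Tu_n=u_{n-1}$ is indeed the engine (this is Lemma~3.2 in \cite{Ambrozie}; note that the boundary case $j=r_{k-1}+1$, i.e.\ showing $T^{r_k-r_{k-1}}u_{r_k}=u_{r_{k-1}}$, is not quite ``direct from the definitions'' and deserves a line of justification). The argument that the coefficients $c_i^{(N)}$ stabilise via $TA=AT$ and the triangular change of basis is exactly right, and your derivation of the ``in particular'' formula for $Ae_j$ is clean and correct.

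The genuine gap is the square-summability of $(c_i)$, which you yourself flag as the ``principal obstacle''. Your proposed route---Parseval on $Au_N$ plus a clever choice of $N$ exploiting (i)--(iii)---is plausible in spirit but is not actually carried out: the cross-terms coming from the lower-order tails of the $u_{N-i'}$ are not controlled, and the existence of a sequence of $N$'s along which the relevant ratios stay bounded below is asserted rather than established. In \cite{Ambrozie} this step is handled with specific estimates on $\|u_j\|$ tied to the rapid decay of $(\varepsilon_k)$; without reproducing those estimates, this part of your argument remains a heuristic rather than a proof.
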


We note that the right hand side of \eqref{commutantEquation} is well-defined, since for any finitely supported $x\in c_{00}$ we have $T^ix=0$ for all but finitely many $i$. We are ready to prove Proposition \ref{AmbrozieOperatorNoCompact}.

\begin{proof}[Proof of Proposition \ref{AmbrozieOperatorNoCompact}]
Fix a non-zero operator $A\in \{T\}'$. By Lemma \ref{commutantofAmbrozie}, there exists a sequence $(c_i)_{i=0}^\infty$ with $\sum_{i=0}^\infty |c_i|^2 <\infty$ such that $Ax=\sum_{i=0}^\infty c_i T^i x$ for all $x\in c_{00}$. Because $A\neq 0$, there must exist an index $j_0 \in \N$ for which $c_{j_0} \neq 0$, as otherwise, $A$ would vanish on the dense subspace $c_{00}$ and consequently on all of $\ell_2$. Next, we choose a sufficiently large integer $k_0\in\N$ so that $j_0 \leq r_{k_0}- r_{k_0-1}-1$. The existence of such a $k_0$ is guaranteed by the fact that $\lim_{k \rightarrow\infty}(r_k - r_{k-1}-1)=\infty$. Consider the sequence $(e_{r_k-1})_{k=k_0}^\infty$. We will show that $(Ae_{r_k-1})_{k=k_0}^\infty$ fails to have a convergent subsequence, implying that $A$ is not a compact operator. 

First observe that for every $k\geq k_0$ we have
\begin{align}
\label{FormulaAmbrozie}
Ae_{r_k-1} = c_0e_{r_k-1} + c_1e_{r_k-2}+c_2e_{r_k-3}+\dots+c_{r_k - r_{k-1}-1}e_{r_{k-1}} + u,
\end{align}
for some $u\in \operatorname{span}\{e_0,\dots,e_{r_{k-1}-1}\}$. To see this, fix $k\geq k_0$. Recall from the definition of the operator $T$ that for each index $j$ satisfying $r_k <j <r_{k+1}$ we have $Te_j = e_{j-1}$. In our case, since $r_{k-1}<r_k-1<r_k$, we obtain 
\begin{align}
\label{formula4}
T^ie_{r_k-1} = e_{r_k-i-1}\quad\text{for all } i=0,1,\dots,r_k-r_{k-1}-1.
\end{align}
Next, the latter part of Lemma \ref{commutantofAmbrozie} allows us to expand $Ae_{r_k-1}$ into the following finite sum:
\begin{align*}
Ae_{r_k-1} &= c_0e_{r_k-1} + c_1\omega_{r_k-1}e_{r_k-2}+c_2\omega_{r_k-1}\omega_{r_k-2}e_{r_k-3}+\dots\\
&+c_{r_k - r_{k-1}-1}\omega_{r_k-1}\omega_{r_k-2}\dots \omega_{r_{k-1}+1} e_{r_{k-1}}\\
&+c_{r_k - r_{k-1}-2}\omega_{r_k-1}\omega_{r_k-2}\dots \omega_{r_{k-1}+1}\omega_{r_{k-1}} e_{r_{k-1}-1}+\dots\\
&+c_{r_k-1}\omega_{r_k-1}\omega_{r_k-2}\dots \omega_1e_0 + v,
\end{align*}
for some $v\in \operatorname{span}\{e_0,\dots,e_{k-2}\}$. From the definition of $T$ and the first part of Lemma \ref{commutantofAmbrozie}, each $\omega_i$ corresponds to the weight appearing in the expansion of $T^ie_{r_k-1}$. Hence, using \eqref{formula4}, we deduce that $\omega_{r_k-1}=\dots=\omega_{r_{k-1}+1}=1$. This simplifies the expression to:
\begin{align*}
Ae_{r_k-1} &= c_0e_{r_k-1} + c_1e_{r_k-2}+c_2e_{r_k-3}+\dots+c_{r_k - r_{k-1}-1}e_{r_{k-1}}+\dots\\
&+c_{r_k - r_{k-1}-2}\omega_{r_{k-1}} e_{r_{k-1}-1}+\dots+c_{r_k-1}\omega_{r_{k-1}}\dots \omega_1 e_0 + v.
\end{align*}
Finally, letting
$$u:=c_{r_k - r_{k-1}-2}\omega_{r_{k-1}} e_{r_{k-1}-1}+\dots+c_{r_k-1}\omega_{r_{k-1}}\dots \omega_1 e_0 + v,$$
we see that $u\in \operatorname{span}\{e_0,\dots,e_{r_{k-1}-1}\}$, which proves \eqref{FormulaAmbrozie}.

Now, fix $n,k\in \N$ with $k_0\leq n<k$. Using \eqref{FormulaAmbrozie} we can write 
\begin{align*}
Ae_{r_k-1} - Ae_{r_n-1} &= c_0e_{r_k-1} + c_1e_{r_k-2}+c_2e_{r_k-3}+\dots+c_{r_k - r_{k-1}-1}e_{r_{k-1}}+u - Ae_{r_n-1}
\end{align*}
for some $u\in \operatorname{span}\{e_0,\dots,e_{r_{k-1}-1}\}$. Since $Ae_{r_n-1} \in \operatorname{span} \{e_0,\dots,e_{r_n-1}\}$ and $k>n$, it follows that 
$$u-Ae_{r_n-1}\in \operatorname{span}\{e_0,\dots,e_{r_{k-1}-1}\}.$$ 
In particular, the vector $z:=c_0e_{r_k-1} + c_1e_{r_k-2}+c_2e_{r_k-3}+\dots+c_{r_k - r_{k-1}-1}e_{r_{k-1}}$ is orthogonal to $u - Ae_{r_n-1}$. Thus,
\begin{align*}
\|Ae_{r_k-1} - Ae_{r_n-1}\|^2 &=\|z\|^2 +\|u-Ae_{r_n-1} \|^2 \geq \|z\|^2 = \sum_{i=0}^{r_k - r_{k-1}-1} |c_i|^2.
\end{align*}
Finally, due to the choice of $k_0$ and the property that $4r_{k-1}<r_k<6r_{k-1}$ for the sequence $(r_k)_{k=0}^\infty$, we obtain 
$$\sum_{i=0}^{r_k - r_{k-1}-1} |c_i|^2 \geq \sum_{i=0}^{r_{k_0} - r_{k_0-1}-1} |c_i|^2\geq |c_{j_0}|^2.$$ 
This implies that   
$$\|Ae_{r_k-1} - Ae_{r_n-1}\|^2 \geq |c_{j_0}|^2 >0,$$
for all $k_0\leq n<k$. Consequently, no subsequence of $(Ae_{r_k-1})_{k=k_0}^\infty$ is Cauchy, and thus none can converge. Hence, $A$ is not a compact operator.

\end{proof}

Even though not every operator can be connected to a compact operator via a chain of commutation, it is interesting to investigate for which operators this is true.
\begin{problem}
Which classes of operators on $\ell_2$ can be connected to a non-zero compact operator via a finite chain?
\end{problem}

Next, all the examples of operators in this article that can be connected to a non-zero compact operator are of chain $3$. A natural question to ask is the following:
\begin{problem}
Given $N\geq 4$, does there exist $T\in \mathcal{L}(X)$ that is of minimal chain $N$?
\end{problem}

As a final remark, we note the following intriguing observation. Recall that an operator $T\in \mathcal{L}(X)$ is said to have a \textit{hyperinvariant subspace}, if it is an invariant subspace for every operator $S\in \{T\}'$. Currently, there are no known examples of operators on complex $\ell_2$ that have an invariant subspace but fail to have a hyperinvariant subspace. It remains an open question whether every operator on $\ell_2$ with an invariant subspace necessarily admits a hyperinvariant subspace. If this were true, any operator $T\in \mathcal{L}(\ell_2)$ of chain $N$ (for any $N\geq 0$) would be guaranteed to have a hyperinvariant subspace. In particular, if every operator on $\ell_2$ were of chain $N$ for some $N\geq 0$, the Invariant Subspace Problem on $\ell_2$ would be solved. However, by Theorem \ref{NoCompactAmbrozie}, we conclude that such a universal chain property cannot hold.

I am grateful to Vladimir Troitsky and Adi Tcaciuc for valuable conversations on the topic of this paper. Moreover, I am thankful to Laurent Marcoux for directing me to the work of \cite{Ambrozie}, as well as to the organizers of the Banach Algebra and Operator Algebra 2024 conference at the University of Waterloo, that provided an opportunity for this discussion to happen. I would also like to thank the anonymous referees for their helpful suggestions, which improved the presentation and readability of the paper.
\bibliographystyle{alpha}
\bibliography{bib}
\Addresses
\end{document}